\newtheorem{lemma}{Lemma}
\newtheorem{theorem}{Theorem}
\theoremstyle{definition}
\newtheorem*{problem*}{Problem}
\newtheorem*{remark*}{Remark}
\newtheorem{assum}{Assumption}
\newcommand{\ba}{\begin{align}}
\newcommand{\ea}{\end{align}}
\newcommand{\fr}{\frac}
\newcommand{\ep}{\varepsilon}
\title{\LARGE \bf Event-Triggered Safe Stabilizing Boundary Control for the Stefan PDE System with Actuator Dynamics} 
\author{
Shumon Koga$^1$\thanks{$^1$Department of Electrical and Computer Engineering, UC San Diego, 9500 Gilman Drive, La Jolla, CA, 92093-0411, {\tt\small skoga@ucsd.edu}}, 
Cenk Demir$^2$, 
and Miroslav Krstic$^2$ \thanks{$^2$Department of Mechanical and Aerospace Engineering, UC San Diego, 9500 Gilman Drive, La Jolla, CA, 92093-0411, {\tt\small cdemir@ucsd.edu, krstic@ucsd.edu}}}
\begin{document}
\maketitle
\begin{abstract}
This paper proposes an event-triggered boundary control for the safe stabilization of the Stefan PDE system with actuator dynamics. The control law is designed by applying Zero-Order Hold (ZOH) to the continuous-time safe stabilizing controller developed in our previous work. The event-triggering mechanism is then derived so that the imposed safety conditions associated with high order Control Barrier Function (CBF) are maintained and the stability of the closed-loop system is ensured. We prove that under the proposed event-triggering mechanism, the so-called ``Zeno" behavior is always avoided, by showing the existence of the minimum dwell-time between two triggering times. The stability of the closed-loop system is proven by employing PDE backstepping method and Lyapunov analysis. The efficacy of the proposed method is demonstrated in numerical simulation. 
\end{abstract}

\section{Introduction}
\label{sec: intro}

Safety is an emerging notion in control systems, ensuring required constraints in the state, which is a significant property in numerous industrial applications including autonomous driving and robotics. Classically, such a constrained control design has been treated by model predictive control \cite{hewing2020learning}, reference governor \cite{gilbert2002nonlinear}, reachability analysis \cite{bansal2017hamilton}, etc. Following the pioneering work by Ames, et. al \cite{ames2016control}, the concept of Control Barrier Function (CBF) and the safe control design by CBF using Quadratic Programming (QP) have been widely spread to the control  community, such as robust CBF \cite{JANKOVIC2018359}, adaptive CBF \cite{xiao2021adaptive}, fixed-time CBF \cite{garg2021robust}. The system is said to be safe if the required safe set is forward invariant, guaranteeing the positivity of CBF. 

While most of the research in safe/constrained control has focused on the systems described by Ordinary Differential Equations (ODEs), a few recent works have been developing and analyzing the safety or state constraints in the systems described by Partial Differential Equations (PDEs), where the safety in the infinite-dimensional state needs to be satisfied, such as for distributed concentration \cite{refId0}, gas density \cite{karafyllis2021global}, or the liquid level \cite{karafyllis2021spillfree}. Our recent work \cite{koga2022ACC} has incorporated the concept of CBF into the boundary control of a PDE system, the so-called "Stefan system" \cite{Shumon19journal,KKbook2021}, which is a representative model for the thermal melting process \cite{koga2019arctic} and biological growth process \cite{demir2021neuron}. We have designed the nonovershooting control \cite{krstic2006nonovershooting} to achieve both safety and stabilization of the system around the setpoint, and also developed a CBF-QP safety filter for a given nominal control input. The proposed control law is given in continuous time; however, many practical control systems do not afford sufficiently high frequency in the actuator to justify the continuous-time control input.

Some practical technologies own  constraints in sensors and systems with respect to energy, communication, and computation, which require the execution of control tasks when they are necessary \cite{6425820}. To deal with this problem, the digital control design should be aimed to reduce the number of closing the loop based on the system state, developed as event-triggered control. Pioneering work on event-triggered control is designed for PID controllers in \cite{AARZEN19998687}. Following the literature, authors of \cite{1184824} have demonstrated the advantages of event-driven control over time-driven control for stochastic systems. The authors in \cite{4303247} proposed event-triggered scheduling to stabilize systems by a feedback mechanism. Subsequent works  \cite{heemels2008analysis,kofman2006level} proposed novel state feedback and output feedback controllers for linear and nonlinear time-invariant systems. Unlike the former literature, \cite{6160794} and \cite{donkers2010output} have regarded the closed-loop system under the event-triggered control as a hybrid system and guaranteed asymptotical stability with relaxed conditions for the triggering mechanism. To utilize the effect of this relaxation, dynamic triggering mechanisms are presented in \cite{girard2014dynamic}. 

While all the studies of the event-triggered control mentioned above are for ODE systems, authors of \cite{selivanov2016distributed} and \cite{yao2013resource} proposed event-triggered in-domain control for hyperbolic PDE systems, and authors of \cite{espitia2016event} proposed event-triggered boundary control for hyperbolic PDE systems. Following these studies, \cite{espitia2021event} developed an event-triggered boundary controller for a reaction-diffusion PDE, and \cite{wang2021adaptive} derived an adaptive event-triggered control for a hyperbolic PDE with time-varying moving boundary with bounded velocity. As a state-dependent moving boundary PDE, an event-triggered control has been developed to stabilize the one-phase Stefan PDE in \cite{rathnayake2022event}. However, the event-triggering mechanism in \cite{rathnayake2022event} owns a limitation that the upper bound of the dwell-time is identical to the sole condition for sampling-time scheduling developed in \cite{koga2021towards}. Indeed, the condition of the sampling schedule in \cite{koga2021towards} serves as the necessary condition for ensuring the required constraint in the Stefan system. While, such a necessary condition has not been clarified in the Stefan system with actuator dynamics considered in \cite{koga2022ACC}, which requires the methodology from high-relative-degree CBF for satisfying the state constraint.  

The event-triggering mechanism has been developed for the purpose of safety with CBF-based methods in  \cite{yang2019self}, which proves the nonexistence of the so-called ``Zeno behavior" by showing the existence of a lower bound of the dwell-time. Following this work, \cite{taylor2020safety} achieves safety under the event-triggering mechanism by ensuring the existence of the minimum bounded interevent time using input-to-state safe barrier functions. The recent study in \cite{long2022safety} has proposed the safety-critical event-triggered controller design for general nonlinear ODE systems. In addition, the event-triggered control with high-order CBFs under unknown system dynamics has been handled in \cite{xiao2021event} by adaptive CBF approach and in \cite{dhiman2021control} by Gaussian Process learning approach. Such digital safe control methods have been applied to spacecraft orbit stabilization \cite{ong2022stability}, network systems \cite{ong2021performance}, and so on. However, the safe event-triggered control for PDE systems has not been established yet.   

The contributions of the paper include (i) designing the event-triggered mechanism for the Stefan PDE system with actuator dynamics, so that both the safety and stability are maintained, (ii) and proving the nonexistence of Zeno behavior by showing the existence of the minimum dwell-time between two triggering times. Indeed, this paper provides the first study of the event-triggered boundary control for a PDE system to achieve safety and stability. The safety is shown by employing the high order CBF, and the stability is proven by utilizing PDE backstepping method and Lyapunov analysis. 
\section{Stefan Model and Constraints}\label{sec:problem}

\begin{figure}[t]
\centering
\includegraphics[width=0.99\columnwidth]{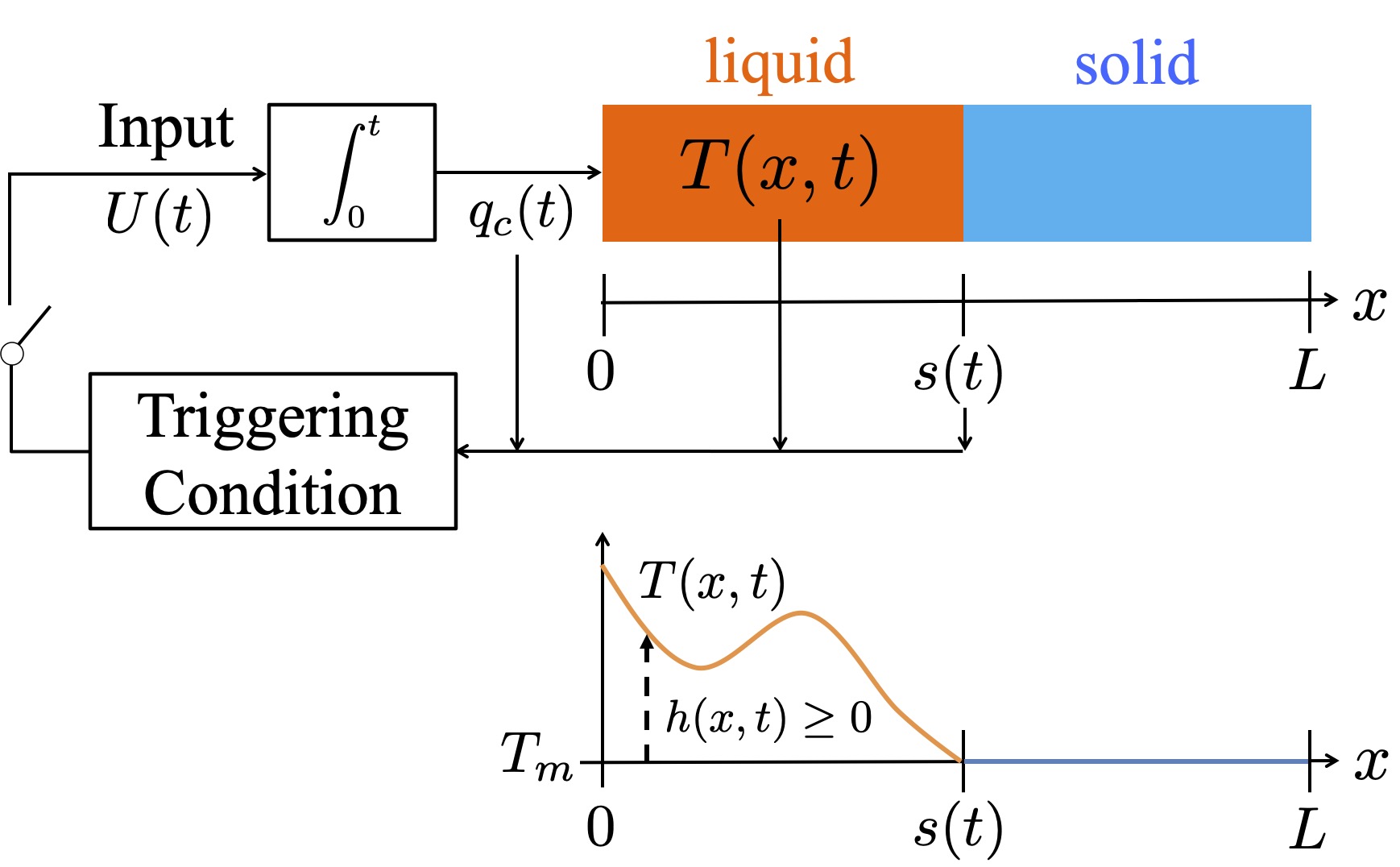}\\
\caption{Schematic of the one-phase Stefan problem with actuator dynamics under event-triggered control.}
\label{fig:stefan}
\end{figure}

Consider the melting or solidification  in a material of length $L$ in one dimension (see  Fig.~\ref{fig:stefan}). Divide the domain $[0, L]$ into two time-varying sub-intervals: $[0,s(t)]$, which contains the liquid phase, and $[s(t),L]$, that contains the solid  phase. Let the heat flux be entering at the boundary of the liquid phase to promote the melting process

The energy conservation and heat conduction laws yield the heat equation of  the liquid phase, the boundary conditions, the dynamics of the moving boundary, and the initial values as follows:

\begin{align}\label{eq:stefanPDE}
T_t(x,t)&=\alpha T_{xx}(x,t), \hspace{2mm}  \textrm{for} \hspace{2mm} t > 0, \hspace{2mm} 0< x< s(t), \\ 
\label{eq:stefancontrol}
-k T_x(0,t)&=q_{\rm c}(t),  \hspace{2mm} \textrm{for} \hspace{2mm} t >0,\\ \label{eq:stefanBC}
T(s(t),t)&=T_{{\rm m}}, \hspace{2mm} \textrm{for} \hspace{2mm} t >0, \\
 \label{eq:stefanODE}
\dot s(t) & =- \beta T_x(s(t),t) , \\
\label{eq:stefanIC}
 s(0) &=  s_0, \textrm{and } T(x,0) = T_0(x),  \textrm{for } x \in (0, s_0]. 
\end{align}
The heat flux $q_{\rm c}(t)$ is manipulated by the voltage input $U(t)$ modeled by a first-order actuator dynamics:
\begin{align} \label{eq:stefan-actuator} 
    \dot q_{c}(t) =  U(t).  
\end{align} 

There are two requirements for the validity of the model
\begin{align}\label{temp-valid}
T(x,t) \geq& T_{{\rm m}}, \quad  \forall x\in(0,s(t)), \quad \forall t>0, \\
\label{int-valid}0 < s(t)<  &L, \quad \forall t>0. 
\end{align}

First, the trivial: the liquid phase is not frozen, i.e., the liquid temperature $T(x,t)$ is greater than the melting temperature $T_{\rm m}$. Second, equally trivially, the material is not entirely in one phase, i.e., the interface remains inside the material's domain. These physical conditions are also required for the existence and uniqueness of  solutions \cite{Gupta03}.
Hence, we assume the following for the initial data. 

\begin{assum}\label{ass:initial} 
$0 < s_0 < L$, $T_0(x) \in C^1([0, s_0];[T_{\rm m}, +\infty))$ with $T_0(s_0) = T_{\rm m}$.
 \end{assum}

We remark the following lemma. 

\begin{lemma}\label{lem1}
With Assumption \ref{ass:initial}, if $q_{\rm c}(t)$ is a bounded piecewise continuous non-negative heat function, i.e.,
\begin{align}
q_{{\rm c}}(t) \geq 0,  \quad \forall t\geq 0,  
\end{align}
then there exists a unique classical solution for the Stefan problem \eqref{eq:stefanPDE}--\eqref{eq:stefanODE}, which satisfies \eqref{temp-valid}, and 
%for all $t \geq 0$, and
\begin{align} \label{eq:sdot-pos} 
    \dot s(t) \geq 0, \quad \forall t \geq 0. 
\end{align}

\end{lemma}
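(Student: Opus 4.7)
The plan is to invoke the standard well-posedness theory for the one-phase Stefan problem with a prescribed nonnegative flux at the fixed boundary (as in Gupta \cite{Gupta03}), which under Assumption \ref{ass:initial} and $q_{\rm c}\in L^\infty_{\rm loc}([0,\infty))$ piecewise continuous with $q_{\rm c}\ge 0$ yields a unique classical solution $(T,s)$ on some maximal interval. The rest of the work is to establish the two physical bounds \eqref{temp-valid} and \eqref{eq:sdot-pos} by applying the maximum principle and Hopf's lemma on the moving parabolic domain $\Omega_t:=\{(x,\tau): 0<x<s(\tau),\ 0<\tau\le t\}$, and then to argue that the resulting a priori bounds preclude finite-time blow-up, extending the solution globally.

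First I would prove the temperature lower bound \eqref{temp-valid}. Consider $v(x,t):=T(x,t)-T_{\rm m}$, which satisfies the heat equation on $\Omega_t$ with $v(x,0)=T_0(x)-T_{\rm m}\ge 0$ by Assumption \ref{ass:initial}, $v(s(t),t)=0$ by \eqref{eq:stefanBC}, and $-k v_x(0,t)=q_{\rm c}(t)\ge 0$, i.e., $v_x(0,t)\le 0$. By the weak minimum principle for the parabolic operator $\partial_t-\alpha\partial_{xx}$, the minimum of $v$ over $\overline{\Omega_t}$ is attained on the parabolic boundary. It cannot be attained in the interior of the lateral face $\{x=0\}$ with a value strictly less than the other boundary values: if so, Hopf's lemma would force $v_x(0,\cdot)>0$ at that point, contradicting $v_x(0,t)\le 0$. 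Hence the minimum is $\ge\min\{0,\min_x v(x,0)\}=0$, giving $T(x,t)\ge T_{\rm m}$.

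Next I would establish \eqref{eq:sdot-pos}. Since $v\ge 0$ in $\Omega_t$ and $v(s(t),t)=0$, the point $(s(t),t)$ is a minimum of $v$ on the closure. Applying Hopf's lemma at the moving boundary (which is smooth since $\dot s$ exists classically by the existence result), one obtains $v_x(s(t),t)\le 0$, i.e., $T_x(s(t),t)\le 0$. Substituting into \eqref{eq:stefanODE} yields $\dot s(t)=-\beta T_x(s(t),t)\ge 0$, as required. A cleaner way to phrase the same step is to differentiate $T(x,t)-T_{\rm m}\ge 0$ with equality at $x=s(t)$ from the left, giving $T_x(s(t)^-,t)\le 0$ directly.

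The main obstacle is not any single estimate but the interlocking nature of the argument: global existence, the sign of $T_x$ at the free boundary, and the monotonicity of $s$ are coupled. I would sidestep this by following the standard bootstrap used in the Stefan literature: local existence gives a classical solution on $[0,t^\star)$; the maximum principle argument above then produces the a priori bounds $T\ge T_{\rm m}$ and $\dot s\ge 0$; combined with the energy identity obtained by integrating \eqref{eq:stefanPDE} over $(0,s(t))$, one gets an upper bound on $\int_0^{s(t)}(T-T_{\rm m})dx$ in terms of $\int_0^t q_{\rm c}(\tau)d\tau$, which together with boundedness of $q_{\rm c}$ prevents blow-up of $s$ or of $T$ in finite time, allowing continuation to $t^\star=\infty$. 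The technicalities of Hopf's lemma on a $C^1$ moving boundary are classical and I would just cite \cite{Gupta03} rather than re-derive them.
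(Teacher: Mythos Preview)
Your proposal is correct and follows exactly the approach the paper indicates: the paper itself does not give a detailed proof but simply states that Lemma~\ref{lem1} follows from the maximum principle for parabolic PDEs and Hopf's lemma, citing \cite{Gupta03}. Your sketch fleshes out precisely this argument (minimum principle for $v=T-T_{\rm m}$, Hopf at $x=0$ to rule out a negative minimum there, Hopf at $x=s(t)$ to get $T_x(s(t),t)\le 0$ and hence $\dot s\ge 0$), so there is nothing to add.
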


The definition of the classical solution of the Stefan problem is given in 
%can be seen in literature, for instance see 
Appendix A of \cite{Shumon19journal}. The proof of Lemma \ref{lem1} is by maximum principle for parabolic PDEs and Hopf's lemma, as shown in \cite{Gupta03}. 

Control Barrier Functions (CBFs) are introduced to render the equivalency between the forward invariance of a safe set satisfying the imposed constraints with the positivity of the CBFs. Followng \cite{koga2022ACC}, let $h_1(t)$, $h_2(t)$, $h_3(t)$, and $h(x,t)$ be CBFs defined by 
\begin{align}
    h_1(t) :&= \sigma(t)  \nonumber\\
    & =  -\left( \frac{k}{\alpha} \int_0^{s(t)} (T(x,t) - T_m) dx + \frac{k}{\beta} (s(t) - s_r) \right), \label{eq:h1_def} \\
    h_2(t) &= q_{\rm c}(t), \label{eq:h2_def}\\
    h_3(t) &= - q_{\rm c}(t) + c_1 \sigma(t), \label{eq:h3_def}\\
    h(x,t) &= T(x,t) - T_{\rm m}. \label{eq:h_def}
\end{align}

The functions $h_1(t)$ and $h_2(t)$ defined above can be seen as valid CBFs to satisfy the state constraints in \eqref{temp-valid} and \eqref{int-valid} as shown in the following lemma.  

\begin{lemma} \label{lem:overshoot} 
    With Assumption \ref{ass:initial}, suppose that the following conditions hold: \begin{align}
     \label{eq:h1-pos}   h_1(t) \geq 0, \\
     \label{eq:h2-pos}     h_2(t) \geq 0,  
    \end{align}
    for all $t \geq 0$. Then, it holds that 
    \begin{align} \label{eq:hxt-pos} 
    h(x,t) \geq & 0, \quad \forall x \in (0, s(t)), \quad \forall t \geq 0, \\
      0< s_0 \leq   s(t) \leq & s_r, \quad \forall t \geq 0,  \label{eq:s-ineq} 
    \end{align}
    under the classical solution of \eqref{eq:stefanPDE}--\eqref{eq:stefanODE}.
    \end{lemma}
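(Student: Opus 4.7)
The statement decomposes naturally into three conclusions---$h(x,t)\geq 0$, $s(t)\geq s_0>0$, and $s(t)\leq s_r$---each of which I expect to follow almost immediately from the pieces already in place, so the plan is more one of bookkeeping than of technical effort.

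First I would invoke Lemma \ref{lem1}. The hypothesis \eqref{eq:h2-pos} reads $q_{\rm c}(t)=h_2(t)\geq 0$ for all $t\geq 0$, which (assuming the piecewise-continuity needed for the statement of Lemma \ref{lem1}, as comes from the closed-loop control structure of the paper) puts us squarely in its scope. Lemma \ref{lem1} then delivers two facts at once: the classical solution satisfies $T(x,t)\geq T_{\rm m}$ on $(0,s(t))$ for all $t\geq 0$, which is precisely \eqref{eq:hxt-pos} by the definition \eqref{eq:h_def} of $h(x,t)$; and $\dot s(t)\geq 0$ for all $t\geq 0$. Combining the latter with $s(0)=s_0>0$ from Assumption \ref{ass:initial} immediately yields the lower bound $s(t)\geq s_0>0$ in \eqref{eq:s-ineq}.

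The only remaining piece is the upper bound $s(t)\leq s_r$, and this is where hypothesis \eqref{eq:h1-pos} enters. Rearranging the definition \eqref{eq:h1_def} of $h_1$, the inequality $h_1(t)\geq 0$ is equivalent to
\begin{align*}
\frac{k}{\beta}\bigl(s_r - s(t)\bigr) \;\geq\; \frac{k}{\alpha}\int_0^{s(t)}\!\bigl(T(x,t)-T_{\rm m}\bigr)\,dx \;=\; \frac{k}{\alpha}\int_0^{s(t)} h(x,t)\,dx.
\end{align*}
The right-hand side is nonnegative by the first conclusion $h(x,t)\geq 0$ just established, so the left-hand side is nonnegative as well, which gives $s(t)\leq s_r$.

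There is no real obstacle: the lemma is essentially a statement that the two CBF conditions $h_1\geq 0$ and $h_2\geq 0$, combined with the physical well-posedness result of Lemma \ref{lem1}, are exactly the right pair to encode the model-validity constraints \eqref{temp-valid} and \eqref{int-valid} (with the upper bound sharpened from $L$ to $s_r$). The only subtlety worth flagging is the implicit regularity of $q_{\rm c}$ required for Lemma \ref{lem1} to apply; since $q_{\rm c}$ is generated by integrating the sampled input $U$ through the actuator dynamics \eqref{eq:stefan-actuator}, piecewise continuity is automatic and does not need separate justification.
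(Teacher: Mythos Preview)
Your proof is correct and follows exactly the approach the paper intends: the paper itself merely states that Lemma~\ref{lem:overshoot} ``is proven with Lemma~\ref{lem1},'' and your argument spells out precisely how---using $h_2\geq 0$ to invoke Lemma~\ref{lem1} for \eqref{eq:hxt-pos} and $\dot s\geq 0$, then combining $h_1\geq 0$ with the nonnegativity of the temperature integral to obtain $s(t)\leq s_r$.
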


Lemma \ref{lem:overshoot} is proven with Lemma \ref{lem1}. To validate the conditions \eqref{eq:h1-pos} and \eqref{eq:h2-pos} for all $t \geq 0$, at least the conditions must hold at $t = 0$, which necessitate the following assumptions on the initial condition and the setpoint restriction. 

\begin{assum}
    \label{ass:h2} 
%The initial boundary heat flux is nonnegative, i.e., 
$0 \leq q_{\rm c}(0) $. 
\end{assum} 

\begin{assum}
    \label{ass:h1} 
The setpoint position $s_{\rm r}$ is chosen to satisfy 
\begin{align}
s_{0} +    \frac{\beta}{\alpha} \int_0^{s_0} (T_0(x) - T_{\rm m}) dx  \leq s_{\rm r} < L. 
\end{align}
\end{assum}

Note that CBF $h_3(t)$ defined in \eqref{eq:h3_def} satisfies 
\begin{align} \label{eq:doth1-h3}
\dot h_1(t) = - c_1 h_1(t) + h_3(t).
\end{align} 
This function is introduced to handle the property that the relative degree of $h_1(t)$ is more than 1. We can see it by $\dot h_1 = - q_{\rm c}(t) = - h_2(t)$, which is not explicitly dependent on the control input $U(t)$. However, as presented in the next section, the time derivative of $h_3(t)$ is explicitly dependent on $U(t)$. Moreover, once the positivity of $h_3(t)$ is ensured, then the positivity of $h_1(t)$ is also guaranteed owing to \eqref{eq:doth1-h3}. Thus, in later sections, for ensuring the safety, we mainly focus on guaranteeing the positivity of $h_2(t)$ and $h_3(t)$. 

\section{Event-Triggered Control Design}

In this section, we propose an event-triggered controller for both safety and stability purposes. \cite{koga2022ACC} develops the nonovershooting control
\begin{align} \label{eq:nonover} 
    U^*(t) &= - c_1 h_2(t) + c_2 h_3(t) \notag\\
    &=  - (c_1 + c_2) q_{\rm c}(t) + c_1 c_2 \sigma (t) ,  
\end{align}
under the continuous-time design, where $c_1>0$ and $c_2 >0$ are positive constants satisfying
\begin{align} \label{eq:c1_condition} 
     c_1 \geq \frac{q_{\rm c}(0)}{\sigma(0)} . 
\end{align}
The closed-loop system with $U(t) = U^*(t)$ is ensured to satisfy the constraints \eqref{temp-valid} and \eqref{int-valid} by showing the positivity of all CBFs, and also shown to be globally exponentially stable. 

In this paper, we consider a digital control by applying Zero-Order Hold (ZOH) to the continuous-time control law \eqref{eq:nonover}, given as 
\begin{align} \label{eq:digital}
    U(t) \equiv U^*(t_j), \quad \forall t \in [t_j, t_{j+1}), 
\end{align}
and derive a triggering condition for $t_j$ to maintain both the safety and stability. 

\subsection{Triggering Condition for Safety}

Under \eqref{eq:digital} with \eqref{eq:nonover}, we design the triggering condition so that the following inequalities are satisfied: 
\begin{align}
\label{eq:ddth2} \dot h_2(t) & \geq - \bar c_1 h_2(t) , \\
    \label{eq:ddth3} \dot h_3(t) & \geq - \bar c_2 h_3(t),  
\end{align}
for some positive constants $\bar c_i>0$. The triggering condition is obtained by using the following procedure. 

%{\color{red}Task1: Derive the triggering condition in the form of 
%\begin{align}
%t_{j+1} = \inf\{t>t_j | f_1(t) < \tilde U^*(t) < f_2(t) \} 
%\end{align}
%Namely, the triggering condition ensures $f_1(t) \geq \tilde U^*(t) \geq f_2(t)$. Also, obtain the conditions for $\bar c_i$ for $i = 1,2,3$ so that $f_1(t) \geq \tilde U^*(t) \geq f_2(t)$ holds at $t = t_j$. 
%} \\
First, we start with taking time derivative of \eqref{eq:h1_def} and \eqref{eq:h2_def} and using \eqref{eq:stefan-actuator} so, we have
\begin{align}
%\dot{h}_1(t)=-h_2(t)=-q_{\rm c}(t) \\
\label{eq:dot2h}
    \dot{h}_2(t)
    %=\dot{q}_{\rm c}(t)
    & =U(t), \\
     \dot{h}_3(t) & =-U(t)-c_1 h_2(t) 
  \label{eq:h3dot}
\end{align}
Under the digital control  \eqref{eq:digital}, by defining \begin{align}
    \tilde U^*(t) := U^*(t) - U^*(t_j), 
\end{align}
\eqref{eq:dot2h} and \eqref{eq:h3dot} yield
\begin{align}
%\dot{h}_1(t)=-h_2(t)=-q_{\rm c}(t) \\
\label{eq:dot2h1}
    \dot{h}_2(t)
    %=\dot{q}_{\rm c}(t)
    & = - c_1 h_2(t) + c_2 h_3(t) 
 - \tilde U^*(t), \\
     \dot{h}_3(t) & = - c_2 h_3(t) + \tilde U^*(t). 
  \label{eq:h3dot1}
\end{align}
Thus, one can see that to ensure \eqref{eq:ddth2} and \eqref{eq:ddth3}, it must hold 
\begin{align} 
(c_2-\bar c_2)h_3(t) \leq    \tilde{U}^*(t) \leq (\bar c_1 - c_1) h_2(t) +c_2h_3(t) %\nonumber \\
    %&\leq (-c_1+\bar c_1-c_2+\bar c_2)q_c+c_1(-\bar c_2+c_2)\sigma
    %\label{eq:Uhat_down}. 
   \label{eq:Uhat_up}
\end{align}

The inequality \eqref{eq:Uhat_up} must be satisfied at least $t = t_j$. Noting that $\tilde U^*(t_j) = 0$ and $h_2(t_j) \geq 0$ and $h_3(t_j) \geq 0$, the parameters $\bar c_1$ and $\bar c_2$ introduced in \eqref{eq:ddth2} \eqref{eq:ddth3} can be set as 
\begin{align}
    \bar c_1&=(1+\delta_1) c_1 \\
    \bar c_2&=(1+\delta_2)c_2 
\end{align}
where $\delta_2\geq0$ and $\delta_1\geq 0$ are gain parameters. Then, the condition \eqref{eq:Uhat_up} is rewritten as 
\begin{align}
 - \delta_2 c_2  h_3(t) \leq    \tilde{U}^*(t) \leq \delta_1 c_1 h_2(t) +c_2h_3(t)  
   \label{eq:cond-safe}
\end{align}

% \begin{align}\label{eq:h2_U}
% \dot{h}_2(t)=U(t)\equiv U^*(t_j)
% \end{align}

% First, we aim to ensure \eqref{eq:ddth3}, so taking time derivative of \eqref{eq:h3_def}, and using \eqref{eq:h2_U}, we get
% By adding and subtracting $U^*(t)$ to \eqref{eq:h3dot}, we get
% \begin{align}
% \dot{h}_3(t)=\tilde{U}^*(t)-c_1h_2(t)-U^*(t).
% \label{eq:h3dot2}
% \end{align}
% where $\tilde{U}^*(t)=U^*(t)-U^*(t_j)$ which gives  
% \begin{align}
%     \tilde{U}^*(t)=-c_1h_2(t)+c_2h_3(t)-U^*(t_j)
% \end{align}
% because $U^*(t)=-c_1h_2(t)+c_2h_3(t)$. 
% By using \eqref{eq:ddth3} and \eqref{eq:h3dot2}, we obtain the following inequality

% It remains to ensure that \eqref{eq:ddth2} holds. Thus, taking time derivative of \eqref{eq:h2_def} with \eqref{eq:h2_U} and adding and subtracting $U^*(t)$, we arrive
% \begin{align}
%     \dot{h}_2(t)=U^*(t_j)+U^*(t)-U^*(t).
%     \label{eq:}
% \end{align}
% Using definitions of $\tilde{U}^*(t)$ and $U^*(t)$, and \eqref{eq:ddth2}, we get
%     \begin{align}
%     \tilde{U}(t)&\leq \bar c_1 h_2(t)  -c_1h_2(t)+c_2h_3(t) %\nonumber \\
%     %&\leq (-c_1+\bar c_1-c_2+\bar c_2)q_c+c_1(-\bar c_2+c_2)\sigma
%     \label{eq:Uhat_down}
% \end{align}
Then, we can define the set of event times $I={t_0,t_1,t_2,...}$ with $t_0=0$ forms an increasing sequence with the following rule by using \eqref{eq:cond-safe}
%and \eqref{eq:Uhat_down}, so
\begin{align}
    t_{j+1}=\inf\{\mathcal{S}(t,t_j)\}
\end{align}
where
\begin{align} 
    &\mathcal{S}(t,t_j)=\left\{t\in \mathbb{R}_+|\left(t>t_j\right) \wedge \left( \left( -\delta_2 c_2 h_3(t)> \tilde U^*(t) \right) \right. \right. \nonumber \\ &\left. \left. \vee \left( \tilde U^*(t)> \delta_1 c_1 h_2(t) +c_2 h_3(t)\right) \right) \right\} . \label{eq:event-trigger-safe}
\end{align}
% Comparing \eqref{eq:Uhat_up} and \eqref{eq:Uhat_down}, 
%  using the input event-trigger time error, $\tilde{U}^*(t_j)=0$, and using $h_2(t_j)>0$ and $h_3(t_j)>0$ for ensuring feasibility of the following constraint
% \begin{align}
%     (c_2-\bar c_2)h_3(t)\leq \tilde U^*(t)\leq (\bar c_1-c_1) h_2(t) + c_2 h_3(t),
% \end{align}
% it is sufficient to choose the gains as
% \begin{align}
%     \bar c_2&=(1+\delta_2)c_2 \\ \bar c_1&=(1+\delta_1) c_1
% \end{align}
% where $\delta_2\geq0$ and $\delta_1\geq 0$ are gain parameters.

%{\color{red}Task2: Define $m_1(t) = f_1(t) - \tilde U^*(t) $ and $m_2(t) = \tilde U(t) - f_2(t)$. Keep taking the time derivative of $m_1(t)$ and $m_2(t)$ several times until it becomes constant, and derive an explicit solution of $m_1(t)$ and $m_2(t)$ with respect to time $t \geq t_j$.   }

\subsection{Triggering Condition for Stability}

The event-triggering mechanism for stability can be derived by considering the backstepping approach as follows. Let $X(t)$ be reference error variable defined by $X(t):= s(t)-s_{r}$. Then, the system \eqref{eq:stefanPDE}--\eqref{eq:stefanODE} is rewritten with respect to $h(x,t)$ defined in \eqref{eq:h_def}, $h_2(t)$ defined in \eqref{eq:h2_def}, and $X(t)$ as
\begin{align}\label{u-sys1}
h_{t}(x,t) &=\alpha h_{xx}(x,t),\\
\label{u-sys2}h_x(0,t) &= - h_2(t)/k,\\
\dot h_2(t) &= U(t), \\
\label{u-sys3}h(s(t),t) &=0,\\
\label{u-sys4}\dot X(t) &=-\beta h_x(s(t),t). 
\end{align}

%\subsection{Backstepping transformation to target system} 

%Since $(u,X)$-system does not have a stable ODE as seen in \eqref{u-sys4},  
Following Section 3.3. in \cite{koga2021towards}, we introduce the following forward and inverse transformations:  
\begin{align}\label{eq:DBST}
w(x,t)&=h(x,t)-\frac{\beta}{\alpha} \int_{x}^{s(t)} \phi (x-y)h(y,t) {\rm d}y \notag\\
&-\phi(x-s(t)) X(t), \\
\label{kernel}\phi(x) &= c_1 \beta^{-1} x- \varepsilon, \\
\label{inv-trans}
h(x,t)&=w(x,t)-\frac{\beta}{\alpha} \int_{x}^{s(t)} \psi (x-y)w(y,t) {\rm d}y \notag\\
&-\psi(x-s(t)) X(t), \\
\label{inv-gain}
\psi(x) &= e^{ \bar \lambda x } \left( p_1 \sin\left( \omega x \right) + \varepsilon \cos\left( \omega x \right) \right) , 
\end{align}
where $\bar \lambda = \frac{\beta \varepsilon}{2 \alpha}$, $\omega = \sqrt{\frac{4 \alpha c_1 - (\varepsilon\beta)^2 }{4 \alpha^2 } }$, $p_1 =  - \frac{1}{2 \alpha \beta \omega} \left( 2 \alpha c_1 - (\varepsilon \beta )^2 \right) $, and $0<\varepsilon<2 \frac{\sqrt{\alpha c_1}}{\beta}$ is to be chosen later. 
As derived in Section 3.3. in \cite{koga2021towards}, taking the spatial and time derivatives of \eqref{eq:DBST} along the solution of \eqref{u-sys1}-\eqref{u-sys4}, and noting the CBF $h_3(t)$ defined in \eqref{eq:h3_def} satisfying \eqref{eq:ddth3}, one can obtain the following target system: 
\begin{align}\label{tarPDE}
w_t(x,t)&=\alpha w_{xx}(x,t)+ \dot{s}(t) \phi'(x-s(t))X(t) 
, \\
%\notag\\
%&+\phi(x-s(t)) d(t), \\
\label{tarBC2} w_x(0,t) &= \frac{h_3(t)}{ k} - \frac{\beta}{\alpha}\varepsilon \bigg[  w(0,t)-\frac{\beta}{\alpha} \int_{0}^{s(t)} \psi (-y)w(y,t) {\rm d}y \notag\\
& -\psi(-s(t)) X(t) \bigg].  \\
\dot h_3(t) &= - c_2 h_3(t) + \tilde U^*(t), \label{tar_h3} \\
\label{tarBC1} w(s(t),t) &= \varepsilon X(t), \\
\label{tarODE}\dot X(t)&=-c_1 X(t)-\beta w_x(s(t),t).
\end{align}

Note that \eqref{tarBC2} is derived using $ h_3(t) = -q_{\rm c}(t) + c_1 \sigma(t) = - h_2(t) - c_1 \left( \frac{k}{\alpha} \int_0^{s(t)} (T(x,t) - T_{\rm m}) dx + \frac{k}{\beta} (s(t) - s_{\rm r}) \right) \notag\\
    = - h_2(t) - k \left( \frac{\beta}{\alpha} \int_0^{s(t)} \phi'(-y) h(y,t) dy + \phi'(-s(t)) X(t) \right)$, with the helpf of \eqref{kernel}. The objective of the transformation \eqref{eq:DBST} is to add a stabilizing term $-c_1 X(t)$ in \eqref{tarODE} of the target $(w,X)$-system which is easier to prove the stability than $(u,X)$-system. 

    We derive the triggering condition to satisfy the following inequality 
    \begin{align}
    | \tilde U^*(t)| \leq  (1 - \delta_2) c_2 h_2(t) + \delta_2 c_2 h_3(t) , \hspace{3mm} \delta_2 \in (0,1),  \label{eq:cond-stability}
    \end{align}
    which ensures the stability by Lyapunov analysis as shown in the next section. 
    Combining the condition \eqref{eq:cond-stability} for stability with the condition \eqref{eq:cond-safe} for safety, the resulting required condition for simultaneous safety and stability is obtained as 
    \begin{align}
 - \delta_2 c_2  h_3(t) & \leq    \tilde{U}^*(t) \leq \mu_1 h_2(t) + \delta_2 c_2 h_3(t)  
   \label{eq:cond-safe-stable}, \\
   \mu_1 &= \min\{ \delta_1 c_1, (1 - \delta_2) c_2 \} \label{eq:mu1-def} 
\end{align}
    thereby the event-triggering mechanism is described by 
\begin{align}
t_{j+1} &= \inf \left\{t\in \mathbb{R}_+|\left(t>t_j\right) \wedge \left( \left( -\delta_2 c_2 h_3(t)> \tilde U^*(t) \right) \right. \right. \nonumber \\ &\left. \left. \vee \left( \tilde U^*(t)> \mu_1 h_2(t) + \delta_2 c_2 h_3(t)\right) \right) \right\} . \label{eq:triggering-safe-stable}
\end{align}

    \section{Closed-Loop Analysis and Main Results}

This section provides the theoretical analysis of the closed-loop system under the proposed event-triggered control law. 

\subsection{Avoidance of Zeno Behavior}

One potential issue of the event-triggered control is the so-called ``Zeno" behavior, which causes infinite triggering times within finite time interval. Such behavior essentially does not enable the digital control implementation. The Zeno behavior can be proven not to exist by showing the existence of the minimum dwell-time. We state the following theorem. 

\begin{theorem}
Let Assumptions \ref{ass:initial}--\ref{ass:h1} hold. Consider the closed-loop system consisting of the plant \eqref{eq:stefanPDE}--\eqref{eq:stefan-actuator} and the event-triggered boundary control \eqref{eq:nonover} with the gain condition \eqref{eq:c1_condition} and the triggering mechanism \eqref{eq:triggering-safe-stable}. There exists a minimal dwell-time between two
triggering times, i.e. there exists a constant $\tau >0$ (independent of the initial condition) such that $t_{j+1} - t_j \geq \tau$, for
all $j \geq 0$. Moreover, all CBFs defined as \eqref{eq:h1_def}--\eqref{eq:h_def} satisfy the constraints $h_1(t) \geq 0$, $h_2(t) \geq 0$ for all $t \geq 0$, and $h(x,t) \geq 0$ for all $x \in (0, s(t))$ and for all $ t \geq 0$. 
\end{theorem}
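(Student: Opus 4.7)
The proof separates into a safety part, which follows essentially by construction from the triggering mechanism, and a dwell-time lower bound, which is the main technical content and which I would establish by a scale-invariance argument on a compact simplex of normalized data.

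For the safety claims, the plan is to exploit the fact that the triggering rule \eqref{eq:triggering-safe-stable} was designed in Section III-A precisely to enforce $\dot h_2(t) \geq -\bar c_1 h_2(t)$ and $\dot h_3(t) \geq -\bar c_2 h_3(t)$ on every inter-event interval $[t_j, t_{j+1})$. Starting from $h_2(0) = q_{\rm c}(0) \geq 0$ (Assumption \ref{ass:h2}) and $h_3(0) = -q_{\rm c}(0) + c_1 \sigma(0) \geq 0$ (which follows from Assumption \ref{ass:h1}, giving $\sigma(0)\geq 0$, together with the gain condition \eqref{eq:c1_condition}), an inductive comparison argument across the intervals yields $h_2(t), h_3(t) \geq 0$ for all $t \geq 0$. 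Feeding $h_3 \geq 0$ into \eqref{eq:doth1-h3} gives $\dot h_1 \geq -c_1 h_1$, so $h_1(0) \geq 0$ (again from Assumption \ref{ass:h1}) propagates to $h_1(t) \geq 0$ for all $t$. With $q_{\rm c}(t) = h_2(t) \geq 0$ throughout, Lemma \ref{lem1} supplies the classical solution and Lemma \ref{lem:overshoot} then delivers $h(x,t) \geq 0$ on the liquid phase together with the geometric bound $s_0 \leq s(t) \leq s_r < L$.

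For Zeno avoidance, I would first close the loop in closed form on $[t_j, t_{j+1})$ using that $U \equiv U^*(t_j)$ there. Direct integration of \eqref{eq:dot2h} and \eqref{eq:h3dot}, with $r := t - t_j$, gives
\[ h_2(t_j+r) = h_2(t_j) + U^*(t_j)\, r, \]
\[ h_3(t_j+r) = h_3(t_j) - \bigl(U^*(t_j) + c_1 h_2(t_j)\bigr) r - \tfrac{c_1}{2} U^*(t_j)\, r^2. \]
Substituting these into $\tilde U^*(t_j+r) = -c_1\bigl(h_2(t_j+r) - h_2(t_j)\bigr) + c_2\bigl(h_3(t_j+r) - h_3(t_j)\bigr)$ and using $U^*(t_j) = -c_1 h_2(t_j) + c_2 h_3(t_j)$ makes $\tilde U^*$ a polynomial in $r$ whose coefficients are linear functions of $(h_2(t_j), h_3(t_j))$, and the two triggering thresholds $\mu_1 h_2(t_j+r) + \delta_2 c_2 h_3(t_j+r)$ and $-\delta_2 c_2 h_3(t_j+r)$ share the same structural form.

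The main obstacle is to promote this closed-form picture into a positive lower bound on the first exit time of $\tilde U^*(r)$ from its admissible interval, uniformly in the initial data. The key observation is linear homogeneity: rescaling $(h_2(t_j), h_3(t_j)) \mapsto (\lambda h_2(t_j), \lambda h_3(t_j))$ for any $\lambda > 0$ leaves \eqref{eq:triggering-safe-stable} invariant, since both $\tilde U^*$ and the two thresholds scale by the same $\lambda$, so the exit time depends only on the ratio $h_2(t_j) : h_3(t_j)$. The degenerate case $h_2(t_j) + h_3(t_j) = 0$ forces $U^*(t_j) = 0$, so the state is already at equilibrium and no further event is generated; otherwise, normalizing by $h_2(t_j) + h_3(t_j)$ reduces the problem to the compact simplex $\{(\hat a, \hat b) : \hat a, \hat b \geq 0,\, \hat a + \hat b = 1\}$. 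A direct pointwise check---including the two vertices $(1,0)$ and $(0,1)$, where one of the thresholds vanishes at $r = 0$ but the sign of $\dot{\tilde U}^*(t_j)$ immediately steers $\tilde U^*$ into the interior of the admissible interval---shows that the exit time is strictly positive at every point of the simplex. Lower semicontinuity of the exit time in the normalized data, together with compactness of the simplex, then yields the desired uniform lower bound $\tau > 0$ depending only on $c_1, c_2, \delta_1, \delta_2$.
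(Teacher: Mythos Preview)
Your safety argument is correct and matches the paper's reasoning: the triggering mechanism is built to enforce the differential inequalities \eqref{eq:ddth2}--\eqref{eq:ddth3} on each inter-event interval, positivity of $h_2,h_3$ propagates inductively from Assumptions~\ref{ass:h2}--\ref{ass:h1} and \eqref{eq:c1_condition}, and then \eqref{eq:doth1-h3} together with Lemmas~\ref{lem1} and~\ref{lem:overshoot} delivers the remaining constraints.

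For the dwell-time, your closed-form integration and the linear-homogeneity reduction are exactly the structure the paper exploits, but the final step differs. The paper never appeals to semicontinuity or compactness. Instead it introduces the gap functions $m_1(t)=\mu_1 h_2+\delta_2 c_2 h_3-\tilde U^*$ and $m_2(t)=\tilde U^*+\delta_2 c_2 h_3$, observes that $\ddot m_1,\ddot m_2$ are constant on $[t_j,t_{j+1})$, writes $m_1,m_2$ as explicit quadratics in $r=t-t_j$, and then simply drops the positive cross-terms to obtain factored lower bounds $m_1(r)\ge h_2(t_j)\,P_1(r)$ and $m_2(r)\ge c_2 h_3(t_j)\,P_2(r)$, where $P_1,P_2$ are \emph{fixed} quadratics (independent of $h_2(t_j),h_3(t_j)$) with positive constant terms $\mu_1$ and $\delta_2$. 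Their positive roots give explicit $\tau_1,\tau_2$ and $\tau=\min\{\tau_1,\tau_2\}$.

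Your compactness route is conceptually appealing, but the lower-semicontinuity claim is not free and is the one genuine soft spot. The trigger fires on a \emph{strict} inequality, so you are computing the first hitting time of an open set (equivalently, the exit time from a closed region), and such times are in general only \emph{upper} semicontinuous in the data; LSC can fail precisely at boundary configurations like your vertex $(1,0)$, where $m_2(0)=0$. To close the argument you would need an additional observation---e.g.\ that near $(1,0)$ all coefficients of the quadratic $m_2(r)$ are nonnegative (so $m_2\ge 0$ for all $r\ge 0$ and the $m_2$-exit never occurs), while away from $(1,0)$ the constant term $\delta_2 c_2\hat b$ is uniformly bounded below. Once you make that explicit, you have essentially reproduced the paper's factored bound. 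The paper's direct route is shorter and yields concrete constants; your route buys a cleaner conceptual picture but still needs that same quadratic check to be rigorous. A minor side remark: in the degenerate case $h_2(t_j)=h_3(t_j)=0$ the controller is identically zero and no further event is triggered, but the PDE state need not be instantaneously at equilibrium---only $q_c$ and $\sigma$ are.
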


\begin{proof}

%After defining the gains, we introduce $m_1(t)$ and $m_2(t)$ such that for the triggering time $t_{j+1}$ satisfies $t_{j+1}>\tau_j$ for a $\tau_j>0$ where $ \tau_j$ is the solution to $m_i(t)=0$. Thus, we get
Let $m_1(t)$ and $m_2(t)$ be defined by
\begin{align}
    % m_1(t)
    % &=(\bar c_1-c_1) h_2(t) +c_2 h_3(t)-\tilde{U}^*(t)\nonumber \\
    % &=\bar c_1h_2(t) + U^*(t_j),\\
        m_1(t)
    &= \mu_1 h_2(t) + \delta_2 c_2 h_3(t)-\tilde{U}^*(t)\nonumber \\
    &= ( \mu_1 + c_1 ) h_2(t) - (1 -  \delta_2) c_2 h_3(t) + U^*(t_j),\\
m_2(t)
    &=\tilde{U}^*(t) + \delta_2 c_2 h_3(t)\nonumber \\
    &=- U^*(t_j)-c_1h_2(t)+\bar c_2 h_3(t).
\end{align}
Since the event-triggering mechanism ensures both $m_1(t) >0$ and $m_2(t)>0$ for all $t \in [t_{j}, t_{j+1})$ and either $m_1(t_{j+1}) = 0$ or $m_2(t_{j+1}) = 0$ holds, we show that there exists a positive constant $\tau>0$ (lower bound of the dwell-time), which is independent on triggered time $t_j$, such that both $m_1(\bar t) \geq 0$ and $m_2( \bar t) \geq 0$ hold for some $ \bar t - t_j \geq \tau$.  
By taking the time derivatives of $m_1(t)$ and $m_2(t)$, we get
 \begin{align} 
%\label{eq:dotm1}
% \dot{m}_1(t)&=\bar c_1U^*(t_j),\\
\dot m_1(t) &=  ( \mu_1 + c_1 + (1 -  \delta_2) c_2 ) U^*(t_j) + (1 -  \delta_2) c_1 c_2 h_2(t)\label{eq:dotm1} \\
\ddot m_1(t) &=  (1 -  \delta_2) c_1 c_2 U^*(t_j)\label{eq:ddotm1} \\
    \dot{m}_2(t)&=-\left( c_1 + \bar c_2\right)U^*(t_j)-\bar c_2 c_1 h_2(t),\\
    \ddot{m}_2(t)&=-c_1\bar c_2U^*(t_j).\label{eq:ddotm2}
\end{align}
Since \eqref{eq:ddotm1} and \eqref{eq:ddotm2} are constant in time, explicit solutions for $m_1(t)$ and $m_2(t)$ with respect to time $t \geq t_{j}$ are obtained as follows. 
\begin{align}
m_1(t)=& \frac{(1- \delta_2) c_1 c_2 }{2}U^*(t_j)(t-t_j)^2 \notag\\
& + \dot m_1(t_j) (t - t_j) + m_1(t_j)\nonumber \\
    =& - \frac{(1- \delta_2) c_1 c_2 }{2} ( c_1 h_2(t_j) - c_2 h_3(t_j))(t-t_j)^2 \notag\\
    & - \left( c_1 (\mu_1 + c_1) h_2(t_j) \right.  \notag\\
    & \quad \left.- c_2 (c_1 + c_2(1 - \delta_2)) h_3(t_j) \right)(t - t_j) \notag\\
    & + \mu_1 h_2(t_j) + \delta_2 c_2 h_3(t_j) 
    \label{eq:m1-sol}
 \\
    m_2(t)=& - \frac{c_1\bar c_2 U^*(t_j)}{2}(t-t_j)^2+\dot{m}_2(t_j)(t-t_j)+m_2(t_j)\nonumber \\
    =
    & \frac{c_1\bar c_2 ( c_1 h_2(t_j) - c_2 h_3(t_j))}{2}(t-t_j)^2\nonumber \\
    &+ \left(c_1^2 h_2(t_j) -(c_1+\bar c_2)  c_2 h_3(t_j) \right)(t-t_j)\nonumber \\
    & +  \delta_2 c_2 h_3(t_j) . \label{eq:m2-sol}
\end{align}
Owing to the positivity of CBFs $h_2(t)$ and $h_3(t)$ ensured by \eqref{eq:ddth2} and \eqref{eq:ddth3} under the event-triggered mechanism \eqref{eq:event-trigger-safe}, the solutions \eqref{eq:m1-sol} \eqref{eq:m2-sol} for $t \geq t_j$ satisfy the following inequalities:  
\begin{align} \label{eq:m1-sol-ineq}
m_1(t) \geq & h_2(t_j) \left( - \frac{(1- \delta_2) c_1^2 c_2 }{2}  (t-t_j)^2 
\right.  \notag\\ &\left.  
    -  c_1 (\mu_1 + c_1) (t - t_j)  + \mu_1 \right)  , \\
m_2(t) \geq 
    &c_2 h_3(t_j) \left( - \frac{c_1\bar c_2 }{2}(t-t_j)^2
     - (c_1+\bar c_2)  (t-t_j) +  \delta_2 \right). \label{eq:m2-sol-ineq}
\end{align}
One can see that the right hand sides of both \eqref{eq:m1-sol-ineq} and \eqref{eq:m2-sol-ineq} maintain nonnegative for all $t \in [t_j, t_j + \tau)$, where 
\begin{align}
\tau & =  \min \left\{ \tau_1, \tau_2 
\right\} , \\
\tau_1 & = \frac{ - c_1(\mu_1 + c_1) + \sqrt{c_1^2 (\mu_1 + c_1)^2 + 2 \mu_1(1 - \delta_2) c_1^2 c_2 }}{(1 - \delta_2) c_1^2 c_2}, \\
\tau_2 & = \frac{ - (c_1 + \bar c_2) + \sqrt{(c_1 + \bar c_2)^2 + 2 \delta_2  c_1 \bar c_2}}{c_1 \bar c_2}, 
\end{align}
which stands as the minimum dwell-time. 

% Similarly, by considering 
% \begin{align}
% m_3(t) &= - \tilde U^*(t) + (\bar c_2 - c_2) h_3(t) \notag\\
% &= \tilde U^*(t_j) + c_1 h_2(t) +  (\bar c_2 - 2c_2) h_3(t) 
% \end{align}

\end{proof}

\subsection{Stability Analysis}

We prove the stability of the closed-loop system as presented below. 

\begin{theorem} \label{thm:nonover}
Let Assumptions \ref{ass:initial}--\ref{ass:h1} hold. Consider the closed-loop system consisting of the plant \eqref{eq:stefanPDE}--\eqref{eq:stefan-actuator} and the event-triggered boundary control \eqref{eq:nonover} with the gain condition \eqref{eq:c1_condition} and the triggering mechanism \eqref{eq:triggering-safe-stable}. The closed-loop system is  exponentially stable at the equilibrium $s=s_{\rm r}, T(x,\cdot) \equiv T_{\rm m}, q_{\rm c} = 0$,  in the sense of the following norm: 
\begin{align} \label{eq:Phi-def} 
\Phi(t):= || T[t] - T_{\rm m} ||^2 +(s(t) - s_{\rm r})^2 + q_{\rm c}(t)^2,  
\end{align}
for all initial conditions in the safe set, i.e., globally. In other words, there exist positive constants $M>0$ and $b>0$ such that the following norm estimates hold: 
\begin{align}
\Phi(t) \leq M \Phi(0) e^{- bt} .  
\end{align}
\end{theorem}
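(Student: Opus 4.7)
The plan is to establish exponential stability via a Lyapunov argument on the target $(w,X,h_3)$-system \eqref{tarPDE}--\eqref{tarODE}, then transfer the result back to the original $(T,s,q_{\rm c})$-system through the bounded backstepping transformations \eqref{eq:DBST} and \eqref{inv-trans}. Because the triggering mechanism \eqref{eq:triggering-safe-stable} enforces both the safety bound $h_2(t), h_3(t) \geq 0$ (through the analysis of the previous theorem) and the stability bound $|\tilde{U}^*(t)| \leq (1-\delta_2) c_2 h_2(t) + \delta_2 c_2 h_3(t)$, the triggering error enters the target system only as a bounded perturbation that I can absorb into the dissipation.

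First, I would take a Lyapunov functional of the form
\begin{align*}
V(t) = \frac{a}{2} \int_0^{s(t)} w(x,t)^2 \, dx + \frac{b}{2} X(t)^2 + \frac{p}{2} h_2(t)^2 + \frac{q}{2} h_3(t)^2,
\end{align*}
with positive weights $a,b,p,q$ to be tuned. Differentiating and using integration by parts on $\int_0^{s(t)} w w_{xx} \, dx$ yields the usual dissipation $-a\alpha \int_0^{s(t)} w_x^2 \, dx$ together with boundary contributions from $w(s(t),t) = \varepsilon X(t)$ and \eqref{tarBC2}. Combined with $\dot X = -c_1 X - \beta w_x(s(t),t)$, the backstepping cross terms cancel as in the continuous-time analysis of \cite{koga2022ACC}, leaving $-bc_1 X^2$ plus a residual boundary term coming from the $h_3/k$ piece of $w_x(0,t)$ and from the convolution integrals $\int \psi(-y) w \, dy$ in \eqref{tarBC2}. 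The moving-boundary contribution $\dot s(t) \varepsilon^2 X(t)^2$ arising from Leibniz's rule is nonnegative (Lemma~\ref{lem1}), but since $\dot s$ can be written as $\dot X + c_1 X + \beta\psi'(0) X$, it is absorbed via Young's inequality into the $X^2$ and $w_x^2$ dissipation.

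Second, the finite-dimensional part of $\dot V$ picks up $-qc_2 h_3^2 + q h_3 \tilde U^* - pc_1 h_2^2 + pc_2 h_2 h_3 - p h_2 \tilde U^*$ from \eqref{eq:dot2h1}--\eqref{eq:h3dot1}. I would apply the stability triggering bound together with $h_2, h_3 \geq 0$ to control the cross terms $q h_3 \tilde U^*$ and $-p h_2 \tilde U^*$ by a linear combination of $h_2^2$ and $h_3^2$ whose coefficients depend linearly on $\delta_2 \in (0,1)$ and $(1-\delta_2)$. With $\delta_2 < 1$ there is a strictly positive margin in the $h_2^2$ coefficient, and the residual $h_2 h_3$ terms are split by Young's inequality. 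The boundary contribution $w(0,t) h_3(t)/k$ is likewise split into $\varepsilon_0 w(0,t)^2 + \frac{1}{\varepsilon_0 k^2} h_3^2$ and the pointwise piece is controlled by Agmon's/Poincaré's inequality using $w(s(t),t) = \varepsilon X(t)$ and $s(t) \leq s_{\rm r}$ (Lemma~\ref{lem:overshoot}), so it is absorbed into $a\alpha\|w_x\|^2$, $X^2$, and $h_3^2$ at the price of a reduced but still positive margin.

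Finally, after tuning $a,b,p,q$ (with $q$ taken sufficiently large relative to the boundary leakage and $p$ chosen so that the triggering perturbation on $h_2$ is dominated by $pc_1 h_2^2$), I expect to obtain $\dot V(t) \leq -\eta V(t)$ for some $\eta>0$ independent of the initial condition, so $V(t) \leq V(0) e^{-\eta t}$ by Gronwall. The conclusion then follows from norm equivalence: the forward and inverse transformations \eqref{eq:DBST}, \eqref{inv-trans} are bounded on $L^2(0,s(t))$ uniformly in $t$ because their kernels are smooth and $s(t) \in [s_0, s_{\rm r}]$, and the algebraic maps $h = T - T_{\rm m}$, $X = s - s_{\rm r}$, $h_2 = q_{\rm c}$, $h_3 = -q_{\rm c} + c_1 \sigma$ are bi-Lipschitz on the safe set, giving $m_1 \Phi(t) \leq V(t) \leq m_2 \Phi(t)$. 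The hard part will be the bookkeeping when absorbing the triggering-induced perturbations: getting simultaneously a positive margin for $h_2^2$ (which requires $\delta_2$ not too close to $1$) and for $h_3^2$ (which requires $\delta_2$ not too close to $0$), while also paying the Poincaré cost of the $h_3$ boundary leakage, may impose a relation between $\delta_2$, $\varepsilon$, and the gains $c_1, c_2$ that must be stated as part of the admissible parameter range.
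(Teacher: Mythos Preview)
Your overall architecture---Lyapunov analysis on the target $(w,X,h_3)$-system followed by norm equivalence through the bounded invertible backstepping maps---matches the paper. Two departures deserve comment, and the first is a genuine gap.

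\textbf{The $\dot s$ term.} The identity you write, ``$\dot s = \dot X + c_1 X + \beta\psi'(0)X$'', is not correct (one has $\dot s = \dot X$), and more importantly your plan to absorb the moving-boundary contributions into the quadratic dissipation does not go through as stated. Substituting $\dot s = -c_1 X - \beta w_x(s(t),t)$ into $\dot s\,\varepsilon^2 X^2$ and into $\int_0^{s} w\,\dot s\,\phi'(x-s)X\,dx$ produces \emph{cubic} terms such as $-c_1\varepsilon^2 X^3$ and $-\beta\varepsilon^2 X^2 w_x(s,t)$; with $X\le 0$ on the safe set, $-c_1\varepsilon^2 X^3\ge 0$ is destabilizing and cannot be dominated by $-bc_1 X^2$ without a further a priori bound on $|X|$. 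The paper does not attempt this absorption. It keeps the offending terms lumped as $+a\,\dot s(t)\,V(t)$ (invoking Lemma~20 of \cite{koga2021towards}), arrives at $\dot{\bar V}\le -\bar b\,\bar V + a\,\dot s\,\bar V$, and then integrates using $\dot s\ge 0$ and $0<s(t)\le s_{\rm r}$ from Lemma~\ref{lem:overshoot} to obtain $\bar V(t)\le e^{as_{\rm r}}\bar V(0)e^{-\bar b t}$. This Gronwall step with the overshoot factor $e^{as_{\rm r}}$ is the standard device for Stefan-type moving boundaries and is the missing ingredient in your sketch; a clean $\dot V\le -\eta V$ is not what one obtains here.

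\textbf{The finite-dimensional piece.} You take $\tfrac{p}{2}h_2^2+\tfrac{q}{2}h_3^2$; the paper instead uses $V_h=\tfrac12 h_3^2+\tfrac{q}{2}h_1^2$ together with $\dot h_1=-c_1 h_1+h_3$. After bounding $h_3\tilde U^*\le \mu_1 h_2 h_3+\delta_2 c_2 h_3^2$ directly from \eqref{eq:cond-safe-stable} and substituting $h_2=-h_3+c_1 h_1$, the resulting $2\times 2$ form in $(h_1,h_3)$ is made negative definite with the single choice $q=2c_1\mu_1$, using only the built-in relation $\mu_1\le (1-\delta_2)c_2$ from \eqref{eq:mu1-def}. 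No extra gain constraints are incurred. Your $(h_2,h_3)$ route is not wrong in spirit, but---as you yourself anticipate---it forces additional relations among $c_1,c_2,\delta_2$ that the theorem does not assume; switching to $(h_1,h_3)$ removes that difficulty entirely.
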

\begin{proof}
What remains to show is the stability of the system by Lyapunov analysis. Owing to the equivalent stability property through the backstepping transformation, we employ Lyapunov analysis to the target system \eqref{tarPDE}--\eqref{tarODE}. Following Lemma 20 in \cite{koga2021towards}, by introducing a Lyapunov function $V(t)$ defined by 
\begin{align}\label{app:lyap}
V(t) = \fr{1}{2\alpha } || w[t]||^2 + \fr{\varepsilon}{2\beta } X(t)^2,
\end{align}
one can see that there exists a positive constant $\ep^*>0$ such that for all $\ep \in (0, \ep^*)$ the following inequality holds: 
\begin{align}\label{app:dotV2}
\dot V(t) \leq &  - b V (t)    + \fr{ 2 s_{\rm r}}{k^2} h_3(t)^2+ a \dot{s}(t) V(t) , 
\end{align}
where $a = \fr{2\beta \ep }{\alpha} \max \left\{1,\fr{\alpha c^2 s_{{\rm r}}}{2\beta^3 \ep^3} \right\}$, $b =\fr{1}{8} \min \left\{ \fr{\alpha}{s_{{\rm r}}^2}, c \right\}$, and the condition $\dot s(t) \geq 0$ ensured in Lemma \ref{lem1} is applied. We further introduce another Lyapunov function of $h_3$, defined by 
\begin{align} \label{eq:Vh-def} 
    V_h(t) = \fr{1}{2} h_3(t)^2 + \frac{q}{2} h_1(t)^2,  
\end{align}
with a positive constant $q>0$. 
Taking the time derivative of \eqref{eq:Vh-def} and applying \eqref{eq:doth1-h3} and \eqref{tar_h3} yields 
\begin{align} \label{eq:Vh-der} 
    \dot V_h(t) = &   - c_2 h_3(t)^2 + h_3(t) \tilde U^*(t) \notag\\
    & + q \left( - c_1 h_1(t)^2 + h_1(t) h_3(t)\right). 
\end{align}
Under the event-triggered mechanism, the inequality \eqref{eq:cond-stability} is satisfied, thereby \eqref{eq:Vh-der} leads to 
\begin{align} \label{eq:Vh-der-ineq} 
    \dot V_h(t)  \leq & -  c_2 (1 - \delta_2) h_3(t)^2 + \mu_1 h_2 (t)h_3(t)  \notag\\
    & - q c_1 h_1(t)^2 + q  h_1(t) h_3(t). 
\end{align}
From the definition \eqref{eq:h3_def}, we have $h_2 = - h_3 + c_1 h_1$. Substituting this into \eqref{eq:Vh-der-ineq}, applying Young's inequality to the cross term $h_1 h_3$, one can derive the following inequality: 
\begin{align}
\label{eq:Vh-der-ineq2} 
    \dot V_h(t)  \leq & - \left(  c_2 (1 - \delta_2) + \frac{\mu_1}{2} - \frac{q}{2 c_1} \right) h_3(t)^2  \notag\\
    & - \frac{c_1}{2} \left( q - c_1 \mu_1 \right) h_1(t)^2. 
\end{align}
Let $\bar V$ be the Lyapunov function defined by 
\begin{align} \label{eq:Vbar-def} 
    \bar V = V + p V_h.  
\end{align}
Applying \eqref{app:dotV2} and \eqref{eq:Vh-der-ineq2} with setting $q = 2 c_1 \mu_1$, $p = \fr{ 8 s_{\rm r}}{c_2 k^2 (1 - \delta_2) }$, and $\mu_1 \leq (1 - \delta_2) c_2$ by \eqref{eq:mu1-def}, the time derivative of \eqref{eq:Vbar-def} is shown to satisfy 
\begin{align} \label{eq:Vbar-der} 
    \dot {\bar V}(t) \leq - \bar b \bar V(t) + a \dot{s}(t) \bar V(t),  
\end{align}
where $\bar b = \min\{b, 2 s_{\rm r} / k^2\}$. 
As performed in \cite{koga2021towards}, with the condition $0 < s(t) \leq s_{\rm r}$ ensured in Lemma \ref{lem:overshoot}, the differential inequality \eqref{eq:Vbar-der} leads to 
\begin{align}
    \bar V(t) \leq e^{a s_{\rm r}} \bar V(0) e^{- \bar b t}, 
\end{align}
which ensures the exponential stability of the target system \eqref{tarPDE}--\eqref{tarODE}. Due to the invertibility of the transformation \eqref{eq:DBST} \eqref{inv-trans}, one can show the exponential stability of the closed-loop system, which completes the proof of Theorem \ref{thm:nonover}.

\end{proof}
\section{Simulations}
We perform the numerical simulation considering a strip of zinc, the physical parameters of which are given in Table 1 in \cite{Shumon19journal}. The initial interface position is set to $s_0=5 \ [cm]$, and a linear profile is considered for the initial temperature profile which is $T_0(x)=\bar T (1 - x/ s_0) + T_{\rm m}$ with $\bar T=1 \ [^\circ C]$ and $T_m = 420 \ [^\circ C]$. The setpoint position is set as $30 \ [cm]$, and control gains are considered as $\delta_1=10$, $c_1=3.2\times10^{-3}$ and $c_2=0.5\times10^{-2}$. Fig.~\ref{fig:Ut} depicts the event-based control input under two choices of gain parameters $\delta_2=0.3$ and $\delta_2=0.7$. The figure illustrates that the input only requires a few number of control updates to ensure stability and safety of the system as opposed to the continuous-time input signal. Fig.~\ref{fig:qc} demonstrates that the boundary temperature is increased first, then cooled down once the actuator added enough heat and remained positive during this process. In Fig~\ref{fig:st}, the interface position converges to the setpoint without any overshooting with the proposed control law. Fig.~\ref{fig:h1}, is the imposed CBF for and energy amount, stays positive and satisfies CBF condition. In addition, temperature evolution along the domain is presented in Fig.~\ref{fig:hxt}. Starting a linear initial temperature profile, the liquid temperature successfully maintains above the melting temperature with event-triggered control law as the safety of the system. Fig.~\ref{fig:Util} shows the time-evolution of the control signal with the triggering condition \eqref{eq:cond-safe-stable}. Once the input error $\tilde U^*(t)$ reaches the lower bound $-\delta_2c_2h_3(t)$ or upper bound $\min(\delta_1c_1,(1-\delta_2)c_2)q_c(t)+\delta_2c_2h_3(t)$, the triggering event is caused by closing the loop. Then, the input error signal is reset to zero due to the update of the control signal. As a time-evolution, it can be seen that both the lower and upper bounds converge to zero, and the number of events decreases when liquid temperature, $T(x,t)$, converges to the melting temperature, $T_m$.   

\begin{figure}[t]
\centering
\includegraphics[width=0.99\linewidth]{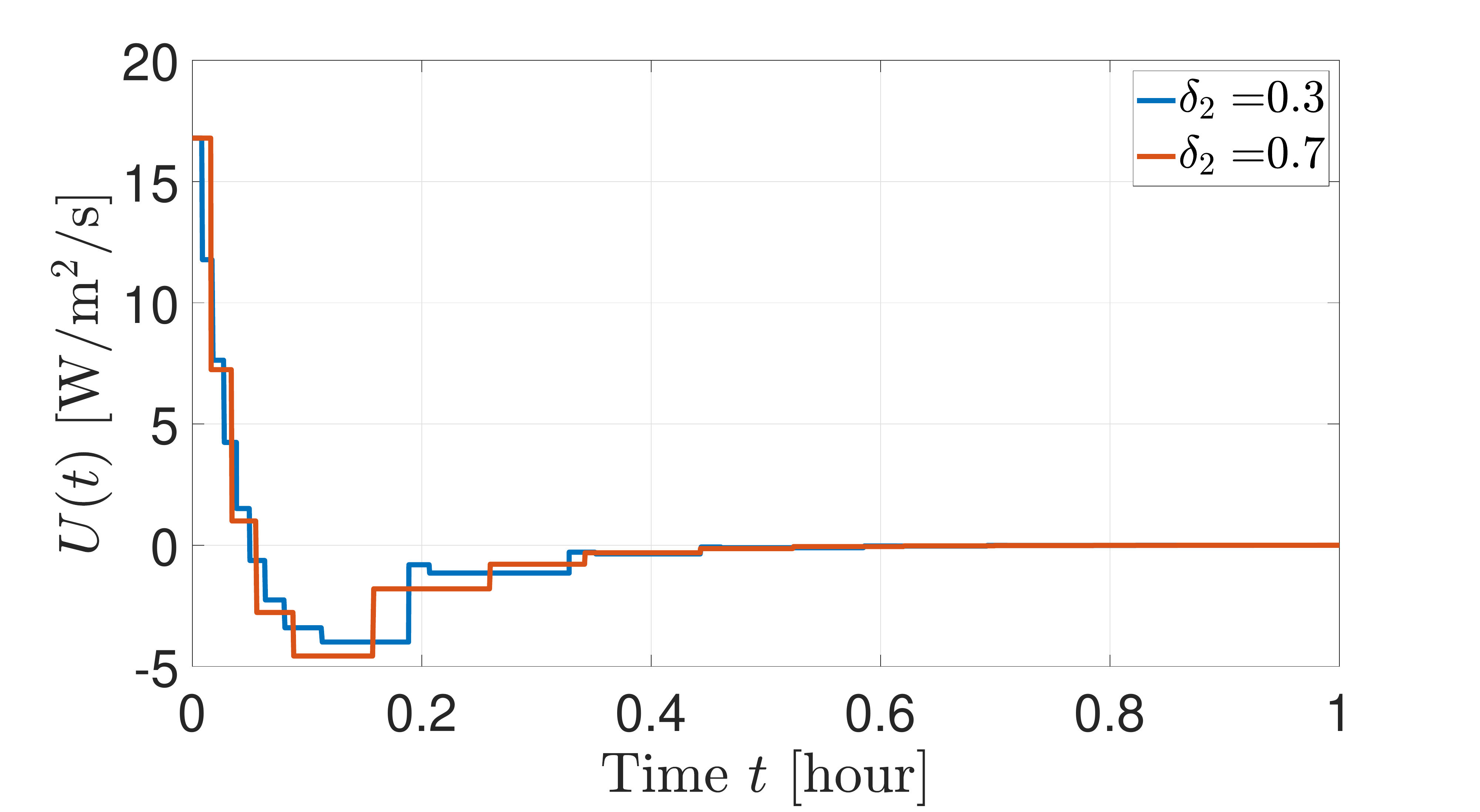}\\
\caption{Event-based control input under $\delta_2 = 0.3$ and $\delta_2 = 0.7$.}
\label{fig:Ut}
\end{figure}

\begin{figure}[t]
\centering
\includegraphics[width=0.99\linewidth]{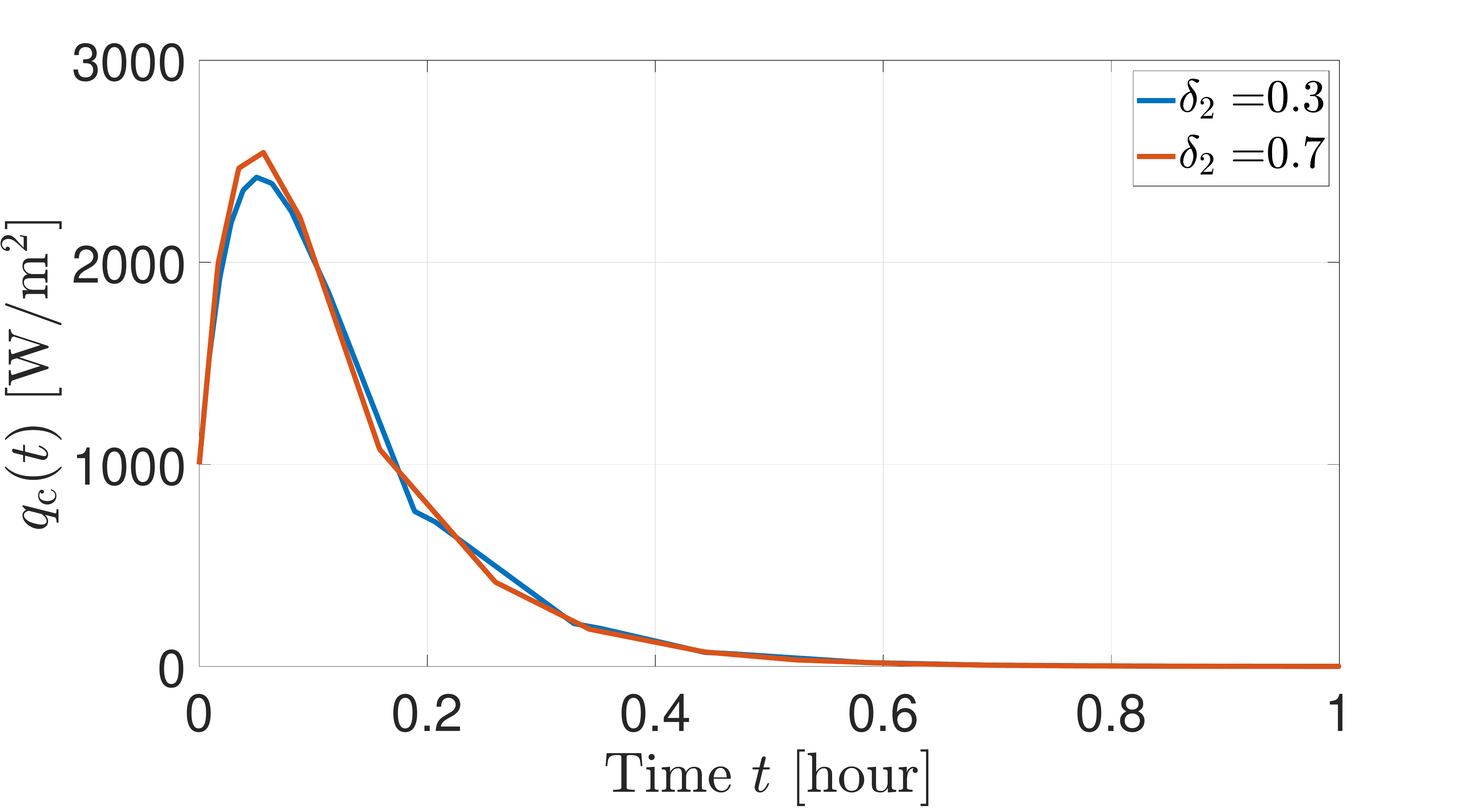}\\
\caption{Boundary heat flux remains positive.}
\label{fig:qc}
\end{figure}

\begin{figure}[t]
\centering
\includegraphics[width=0.99\linewidth]{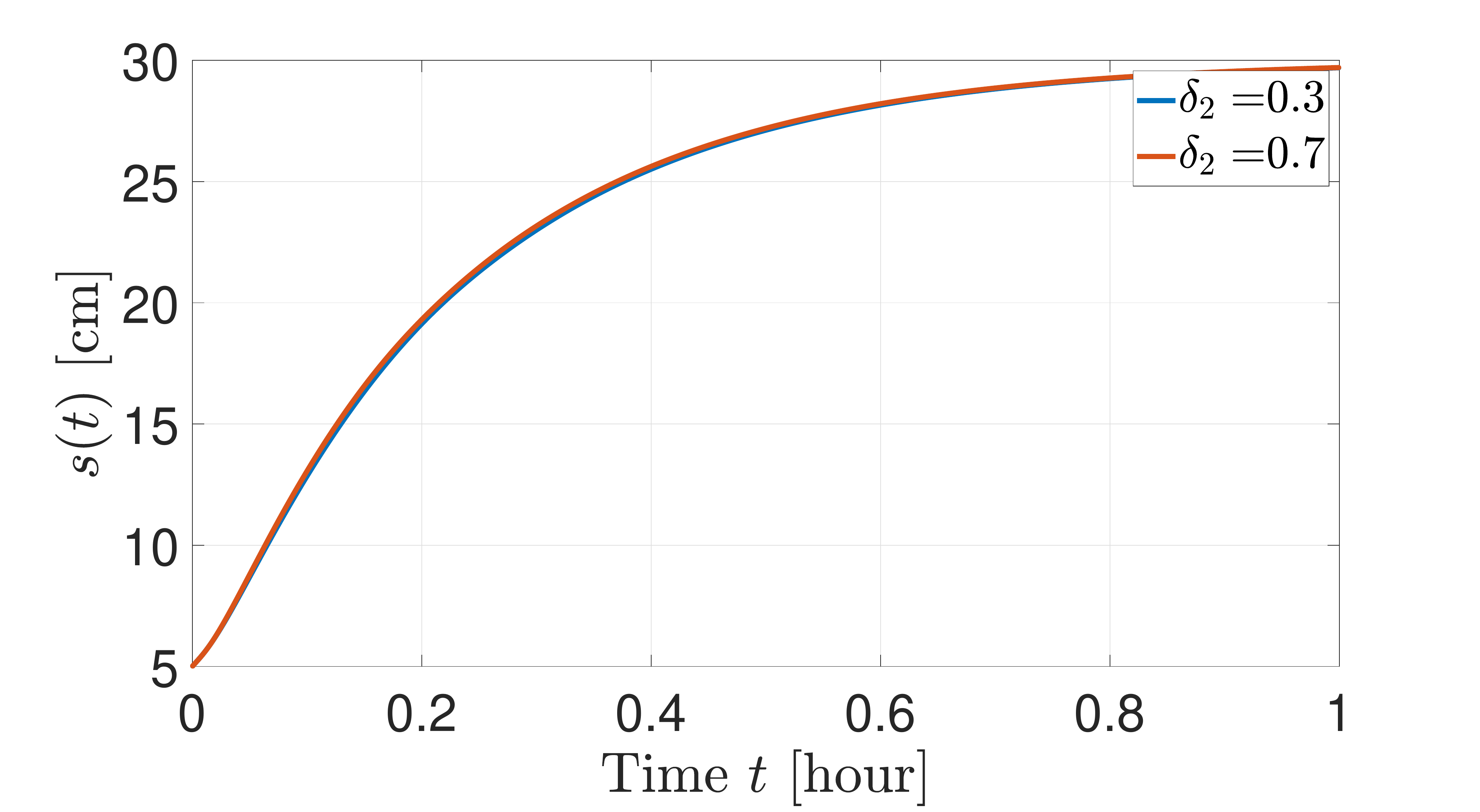}\\
\caption{Interface position converges to setpoint without overshooting. }
\label{fig:st}
\end{figure}

\begin{figure}[t]
\centering
\includegraphics[width=0.99\linewidth]{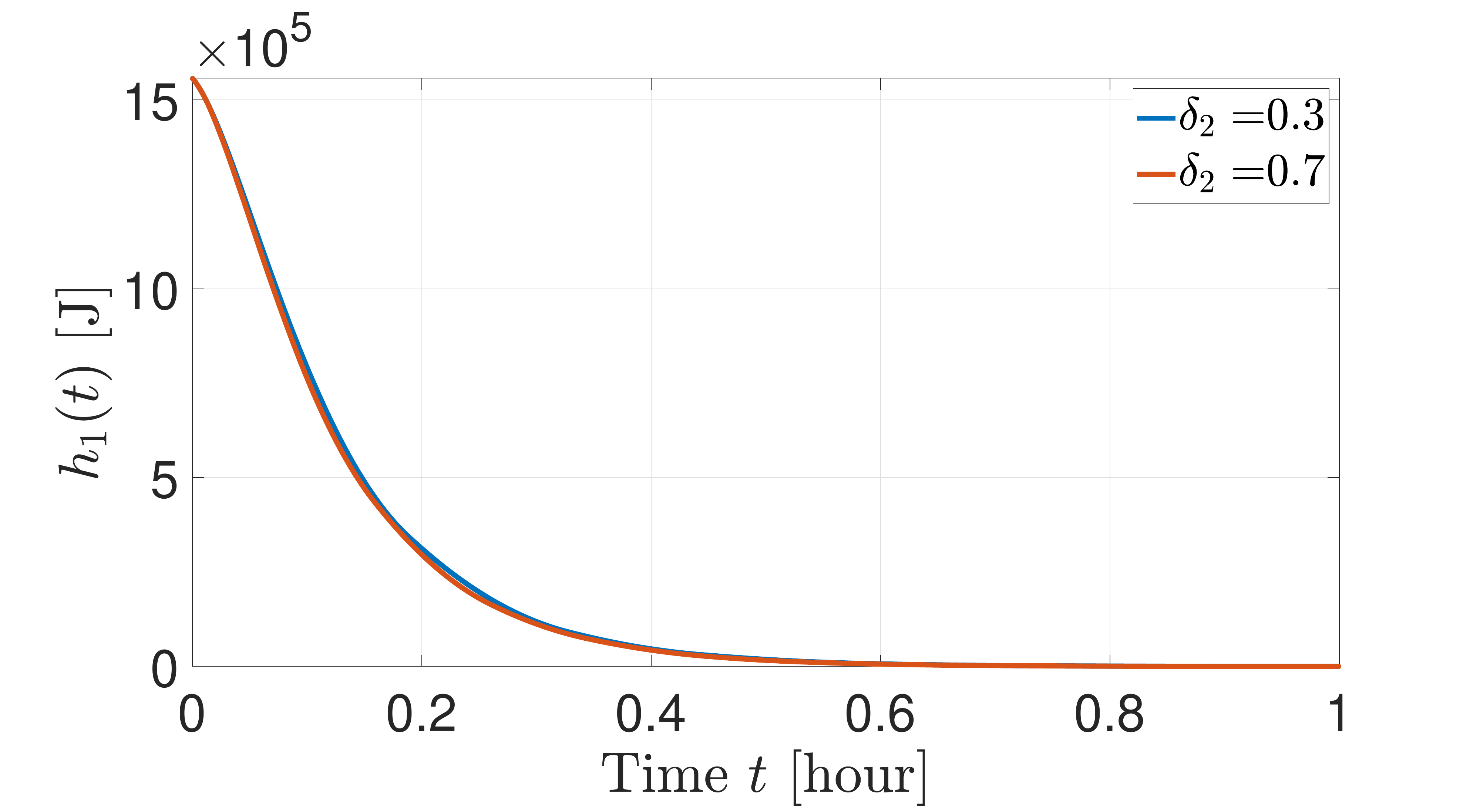}\\
\caption{CBF of energy deficit remains positive. }
\label{fig:h1}
\end{figure}

\begin{figure}[t]
\centering
\includegraphics[width=0.99\linewidth]{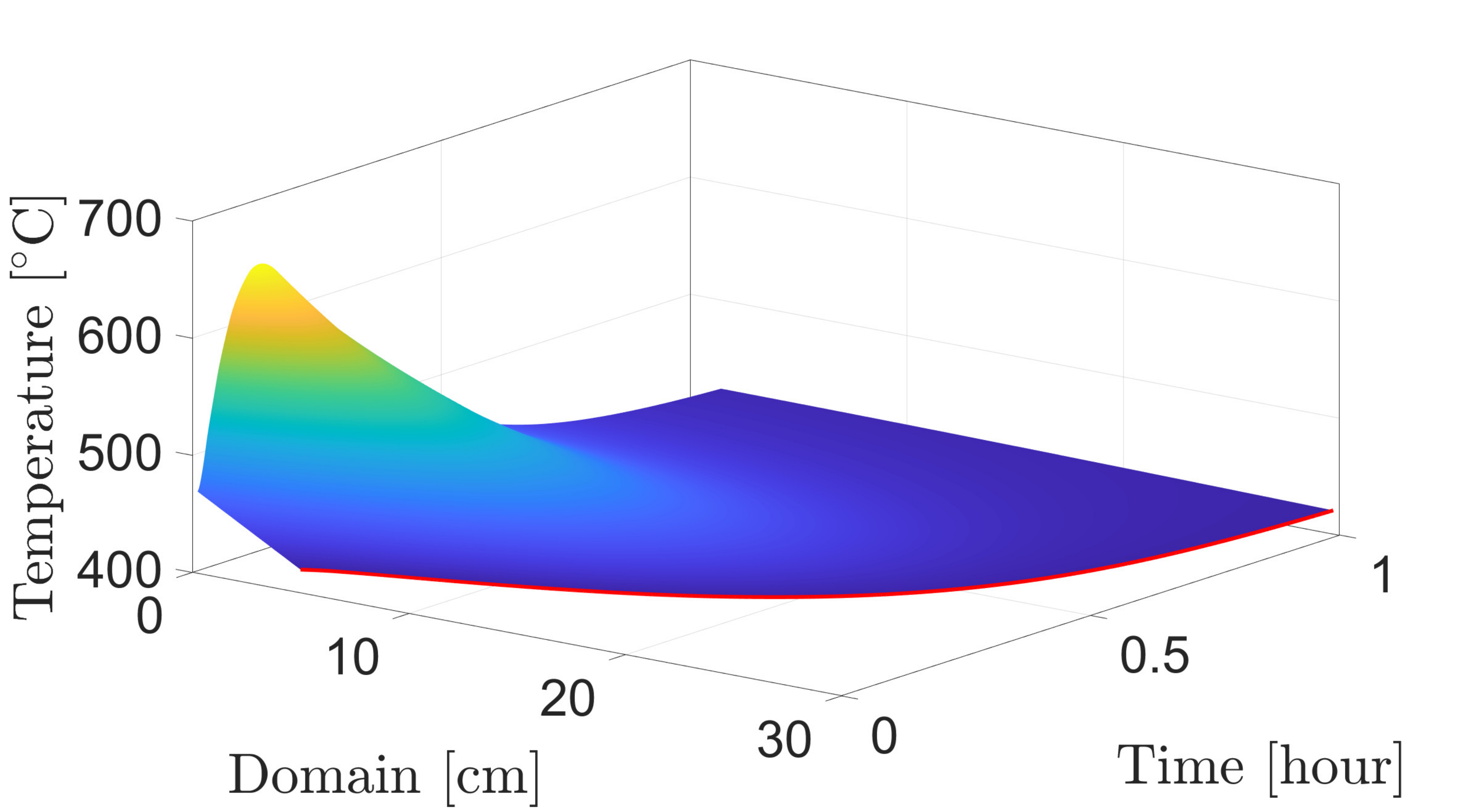}\\
\caption{Liquid temperature, $T(x,t)$ remains above the melting temperature and converges to the melting temperature, $T_m = 420 \ [^\circ C]$. }
\label{fig:hxt}
\end{figure}

\begin{figure}[t]
\centering
\includegraphics[width=0.99\linewidth]{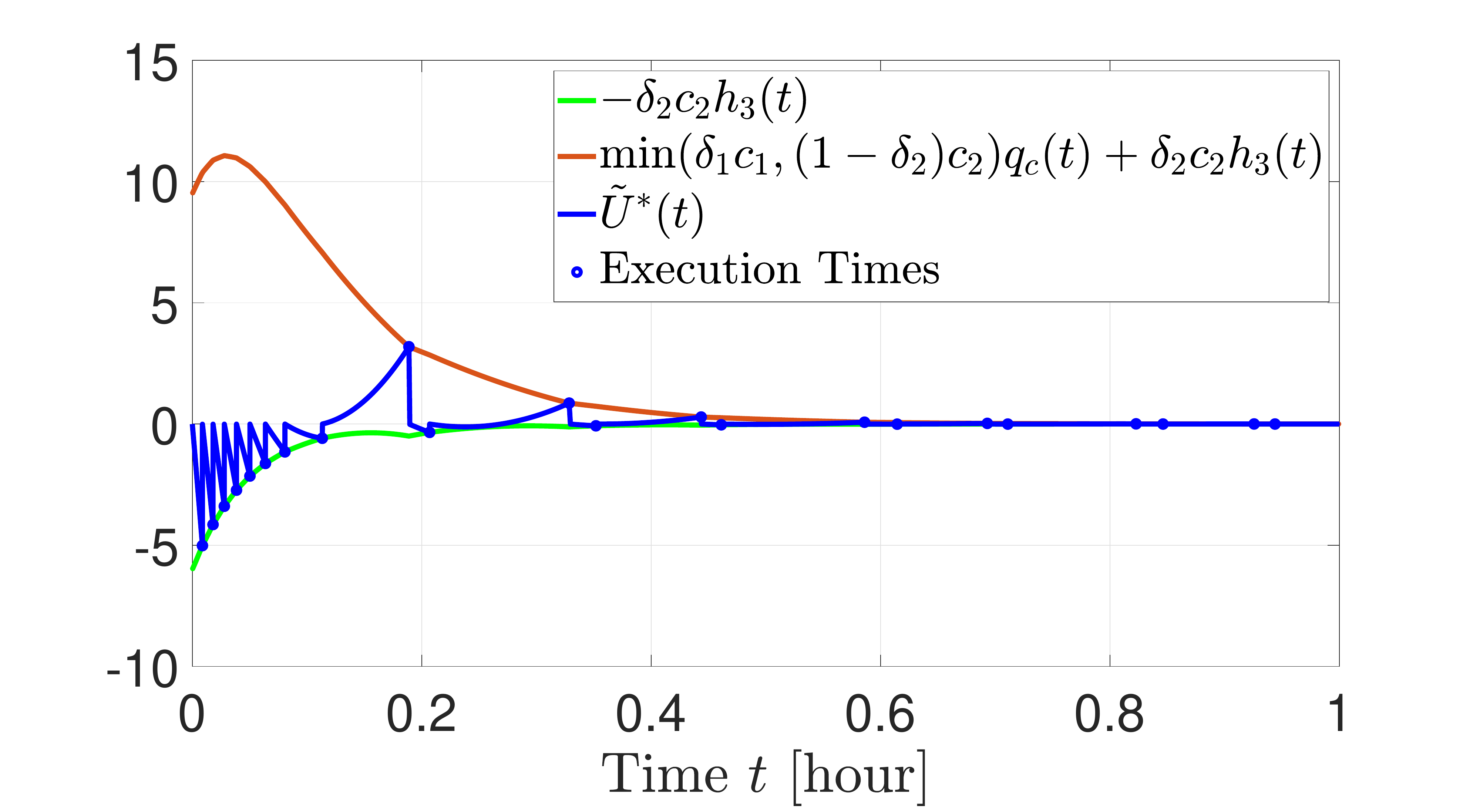}\\
\caption{Trajectories involved in the triggering condition \eqref{eq:cond-safe-stable}. Once the trajectory $\tilde{U}^*(t)$ reaches the lower bound or upper bounds of \eqref{eq:cond-safe-stable} an event is generated, the
control value is updated and $\tilde{U}^*(t)$ is reset to zero.}
\label{fig:Util}
\end{figure}
\section{Conclusion}
% \& Future Work

This paper has proposed an event-triggered boundary control for the safety and stability of the Stefan PDE system with actuator dynamics. The control law is designed by applying Zero-Order Hold (ZOH) to the nominal continuous-time feedback control law to satisfy both the safety and stability. The event-triggering mechanism is designed so that the safety and stability are still maintained from the positivity of the imposed CBFs and the stability analysis. Future work includes the development of adaptive event-triggered control to identify unknown parameters \cite{wang2022event} and its application to the neuron growth process \cite{demir2021neuron}. 

%==================================================================%
% References

\bibliographystyle{ieeetr}
\bibliography{ref.bib}

%\appendices
% \setcounter{section}{0}
% \renewcommand{\thesection}{Appendix \Alph{section}}
%%\input{tex/appendix_1}
%\input{tex/appendix_2}
%\input{tex/appendix_3}
%\input{tex/worst-case-algebra}

\end{document}